\theoremstyle{plain}
\newtheorem{theorem}{Theorem}[section]
\newtheorem{corollary}[theorem]{Corollary}
\newtheorem{lemma}[theorem]{Lemma}
\newtheorem{proposition}[theorem]{Proposition}
\theoremstyle{remark}
\newtheorem{remark}{Remark}
\theoremstyle{definition}
\newtheorem{definition}[theorem]{Definition}
\def\thm@space@setup{
  \thm@preskip=\parskip \thm@postskip=0pt
}
\newcommand{\lap}[1]{\Delta#1}
\newcommand{\ind}{\mathbf{1}}
\newcommand{\at}[2]{\left.#1\right|_{#2}}
\newcommand{\paren}[1]{\left(#1\right)}
\newcommand{\norm}[1]{\Vert#1\Vert}
\newcommand{\abs}[1]{\left|#1\right|}
\newcommand{\brac}[1]{\left[#1\right]}
\newcommand{\PD}[2]{\frac{\partial#1}{\partial#2}}
\newcommand{\vecs}[1]{\boldsymbol{#1}}
\newcommand{\pl}{p_{\lambda}}
\newcommand{\ext}[1]{E_0\brac{#1}}
\newcommand{\DO}[1]{\frac{d}{d#1}}
\newcommand{\PDDM}[3]{\frac{\partial^{2}{#1}}{\partial{#2}\partial{#3}}}
\newcommand{\pd}[2]{\partial_{#2}#1}
\def\R{\mathbb{R}}
\title[Large Coupling Limits in Sobolev Spaces]{Rate of Convergence for Large Coupling Limits in Sobolev Spaces}
\author{Ikemefuna Agbanusi}
\begin{document}

\maketitle

\begin{abstract}
We estimate the rate of convergence, in the so-called large coupling limit, for Schr\"odinger type operators on bounded domains with ``interaction potentials" supported in a compact inclusion. We show that if the boundary of the inclusion is sufficiently smooth, one essentially recovers the ``free Hamiltonian" in the exterior domain with Dirichlet boundary conditions. In addition, we obtain a convergence rate, in $L^2$, that is $\mathcal{O}(\lambda^{-\frac{1}{4}})$ where $\lambda$ is the coupling parameter. Our methods include energy estimates, trace estimates, interpolation and duality.
\end{abstract}

\section{Introduction and Overview}
This paper estimates the rate of convergence, as the coupling parameter gets arbitrarily large, for the Laplacian perturbed by a multiple of the characteristic function of a subdomain. This class of problems has been studied by many authors using different approaches. Our primary goal here is to show how one can obtain similar results using purely ``PDE techniques".

More concretely, let $\Omega\subset\R^m$, $m\geq3$, be a bounded open connected subset with smooth boundary $\Gamma$. Denote
by $A$ the self-adjoint realization of the Laplacian, $\lap$, in $L^2(\Omega)$ with Neumann boundary conditions on $\Gamma$. As is well known,
the operator $A$ generates a positive semigroup formally written as $e^{-tA}$. Intuitively, this semigroup corresponds to a diffusion process with reflection
at the boundary $\Gamma$.

Now, let $\Omega_0\Subset\Omega$ be a compact inclusion with boundary $\Gamma_0$ and put $ \Omega_1:=\Omega\backslash \overline{\Omega}_0$ as the \emph{exterior} (see Figure~\ref{fig:reaction_region}). We consider Schr\"odinger type operators of the form:
 $A_\lambda := A - \lambda\ind_{\overline{\Omega}_0}$; where $\lambda$ is a positive parameter and $\ind_{E}(x)$ is the characteristic function of 
the measurable set $E$. Such operators also determine (parameterized) semigroups $e^{-tA_{\lambda}}$ and we wish to characterize the limit:
 $\displaystyle\lim_{\lambda\to\infty}e^{-tA_{\lambda}}$.

The semigroups $e^{-tA_{\lambda}}$ also correspond to a diffusion process, with reflection at $\Gamma$, which could also get ``killed'' or ``absorbed" on
entering the region $\Omega_0$. Consequently, we expect that as $\lambda\to\infty$ this absorption occurs quicker, on average, so that (at least on a purely 
formal level) $\lambda =\infty$ corresponds to \emph{instantaneous absorption}. In other words, if we denote by $B$ the realization of the
Laplacian in $L^2(\Omega_1)$ with Neumann boundary conditions at $\Gamma$ (reflection) and Dirichlet boundary conditions at $\Gamma_0$ (instantaneous absorption),
we should then have: $\displaystyle\lim_{\lambda\to\infty}e^{-tA_{\lambda}} = e^{-tB}$.

Later, we will make these heuristics more precise. One complication is the fact that the semigroups have different domains: while $e^{-tA_{\lambda}}$ is defined on $L^2(\Omega)$, clearly $e^{-tB}$ is defined on $L^2(\Omega_1)$. We mention that the impetus to study this problem arose in the context of \emph{stochastic reaction diffusions} and comparing different mechanisms for capturing biochemical reactions. For actual applications, in addition to describing the limit, it is desirable to quantify in what manner, i.e. norm, and at what rate one gets convergence. Naturally, the choice of norm will affect the rate one obtains.
%
%
%
\begin{figure}
\begin{center}
\centering 
\scalebox{.45}{\input{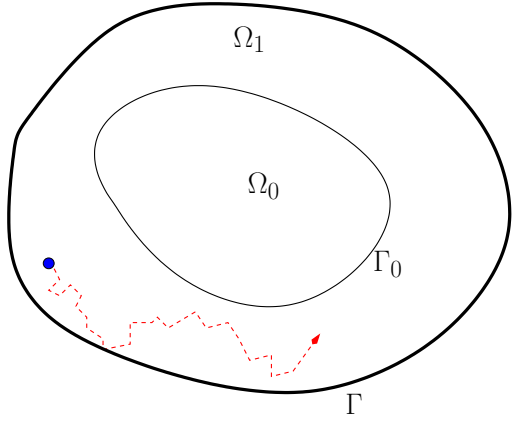}}
\end{center}
\caption{The region $\Omega$ with diffusing particle ``searching for $\Omega_0$"}
\label{fig:reaction_region}
\end{figure}

We now briefly describe our main result and methods. Our approach is to study the associated parabolic problems (see \S 2 for notation):
\begin{align} \label{eq:sobolevGenpureSmol}
 \partial_t\rho&= \kappa \lap{\rho(t,x)}, \quad (t,x)\in Q_1
\end{align}
with initial and boundary conditions
\begin{equation} \label{eq:sobolevpureSmolBC}
 \left. \begin{aligned}
 \rho(t,x) &= 0,\quad (t,x)\in\Sigma_0\\
   \nabla\rho(t,x)\cdot \vecs{\hat{n}} &= 0,\quad (t,x)\in\Sigma\\
    \at{\rho}{t=0} &= g.
  \end{aligned}
  \right\}
  \end{equation}
This problem is sometimes called the Smoluchowski problem and one usually interprets $\rho(t,x)$ as the probability density that the \emph{unreacted} diffusing particle, of diffusivity $\kappa>0$, is
located at $x\in \Omega_1$ at time $t$. Here $g(x)$ is the initial distribution of the reactant and from our previous discussion we note that
$\rho(t,x)= e^{-\kappa tB}(g(x))$.

We will also consider the following problem, which in the context of bimolecular reactions appears to be due to Doi:
 \begin{align} \label{eq:sobolevGenpureDoi}
\partial_t\pl &= \kappa \lap{\pl(t,x)} - \lambda \, \ind_{\overline{\Omega}_0}(x) \pl(t,x), \quad (t,x)\in Q
\end{align}
with initial and boundary conditions
\begin{equation}\label{eq:sobolevpureDoiBC}
   \left.\begin{aligned}
    &\at{\pl}{t=0} = \ext{g}:=
     \begin{cases}
    g(x),  &x \in \Omega_1;\\
    0, & x \in \Omega_0.
  \end{cases}\\
    & \nabla\pl(t,x)\cdot \vecs{\hat{n}} = 0,\quad\quad (t,x)\in\Sigma
 \end{aligned}
 \right\}
 \end{equation}
Let $R_if:= \at{f}{\Omega_i}$, $i=0,1$, be the restriction operators which we need in order to compare the two solutions. One of our main results is
the following:
\begin{theorem}\label{thm:main_result}
Let $0<T<\infty$ be arbitrary but fixed. Assume that all the domains have smooth boundaries and that $E_0[g] \in H^1(\Omega)$. Then there is a constant $C>0$
depending only on the domains and $\norm{g}_{H^1(\Omega_1)}$ such that
\begin{equation}\label{eq:prelim_main}
 \norm{(R_1\circ e^{-\kappa t A_{\lambda}}\circ E_0)[g]-e^{-\kappa tB}[g]}:=\norm{\at{\pl}{\Omega_1} -\rho} \leq C
\lambda^{-\frac{1}{4}},
\end{equation}
where all the norms are taken in $L^2(Q_1)$.
\end{theorem}
 This estimate turns out to be independent of the dimension $m$. Along the way, we also prove somewhat weaker results if $\Gamma_0$ is only assumed to be Lipschitz. Roughly speaking, the overall program is that we first derive estimates in the interior $Q_0$. Using ``trace theorems", we transfer these estimates to the boundary, 
$\Sigma_0$ and show that they imply convergence in the exterior region, $Q_1$.


As mentioned above, large coupling limits have already been studied. The works of {\sc demuth et al} \cite{Demuth:1980uq,Demuth:1992vu,Demuth:1993ug,M.Demuth:1995zr} contain the most complete results on the heat semigroups that we are aware of. These papers study the case $\Omega = \R^m$ which is quite different from the case considered here. Furthermore, their approach is probabilistic in nature: their main technical tools are the Feynman--Kac formula and estimates for occupation and hitting times of Brownian motion.
In \cite{M.Demuth:1995zr}, the rate obtained is, in our notation, $\mathcal{O}(\lambda^{-\frac{1}{2}+\sigma})$ for
$\norm{R_1\circ e^{- t A_{\lambda}} -e^{-t B}\circ R_1}$ in $L^2(\R^m)$. The constant $0<\sigma<\frac{1}{2}$ depends on the geometry and regularity $\Gamma_0$. If $\Gamma_0$ is smooth, they show that one can choose $\sigma$ arbitrarily close to 0 and in \cite{Demuth:1993ug} it is shown that $\sigma =1/4$ in the uniformly convex case.

A summary of this paper is as follows. In \S 2 we fix notation and collect standard facts about our main tools. We define our notion of generalized
solutions in \S 3  and derive some \emph{a priori} estimates. In \S 4, we use these estimates to show weak convergence between the two solutions. We prove Theorem~\ref{thm:main_result} in \S5, after defining and exploiting another notion of a weak solution. We conclude in \S 6 with some remarks on possible improvements of our results.

\subsection*{Acknowledgements}
This paper is a revision of part of the author's doctoral dissertation, prepared under the the direction of Professor Samuel Isaacson and presented to the Boston
University Graduate School. The author is grateful to Professor Isaacson for suggesting the problem, for his advice, encouragement as well as financial support 
while the main part of this work was completed.
The author would also like to acknowledge the comments of an anonymous reviewer which have helped improve the quality of the exposition.

\section{Preliminaries}
We pause here to fix some notation and review some facts we need later.
We define $I$ as the interval $(0,T)$ and also write 
\begin{align*}
Q:=I\times\Omega;\quad Q_0:=I\times\Omega_0; \quad Q_1:=I\times\Omega_1; \quad \Sigma_0 = I\times \Gamma_0 ;\quad \Sigma =  I \times\Gamma.
\end{align*}
$\Gamma_0$ will always be assumed to be at least a Lipschitz boundary (see below) and, for simplicity, $\Gamma$ will always be assumed smooth. We also use the slightly cumbersome notation for the ``time sliced" domains: 
$\Omega_{\{t\}}:=\{t\}\times\Omega;\,\,\Omega_{0,\{t\}}:=\{t\}\times\Omega_0;\,\, \Omega_{1,\{t\}}:=\{t\}\times\Omega_1.$
We will use the usual Sobolev spaces $H^{r}(V)$, with $V$ being any one of $\Omega_0$, $\Omega_1$ or $\Omega$, and the Sobolev spaces on cylinders 
$H^{r,s}(I\times V)=L^2(I;H^r(V))\cap H^s(I;L^2(V))$ and other Banach space valued function spaces. Facts about these spaces can be found in {\sc adams} \cite{Adams:1975fk} and {\sc lions \&  magenes} \cite{Lions:1973fk}.

We also need the ``trace theorem". We appeal to it often enough that we include a statement. Recall that a domain $V$ with boundary $\omega$ is said to be $C^{k,\gamma}$ if every point in $\omega$ has a local chart which is of H\"older class $C^{k,\gamma}$.

\begin{theorem}\label{thm:parab_trace} 
Let $k\geq1$ and let $\omega$ be a $C^{k-1,1}$ boundary and $\tfrac{1}{2}<r\leq k$, $s\geq0$, then there exists a bounded, linear, surjective ``trace map":
 \begin{align*} 
 \mathcal{T}^x_{0}: H^{r,s}(V_T) &\to H^{r-\tfrac{1}{2}, s\paren{1-\tfrac{1}{2r}}}(\omega_T)\\
 u&\mapsto\at{u}{\omega_T}
  \end{align*}
where $V_T$ is the cylinder $I\times V$ and $\omega_T = I\times \omega$ is the lateral boundary.
\end{theorem}

If the boundary is Lipschitz, {\sc costabel} \cite{Costabel:1990uq} has shown that the theorem remains true for $1/2<r<3/2$. The following variant will also be useful:
\begin{theorem}\label{thm:ModifiedTraceTheorem}
Assume $V$ is bounded and $\omega\in C^{0,1}$. Then given any $\epsilon >0$ we have, for some $C>0$ independent of $\epsilon$,
\begin{equation}\label{eq:ModTraceTheorem}
\norm{\mathcal{T}^x_0[u]}_{ L^2(\omega_T)} \leq C\paren{\epsilon^{-1}\norm{u}_{L^2(V_T)} + \epsilon\norm{Du}_{L^2(V_T)}}.
\end{equation}
\end{theorem}

Define the space $H^{1}_o(\Omega_1) := \{u\in H^{1}(\Omega_1):\at{u}{\Gamma_0} =0\}$, with the boundary value taken in the ``trace sense". Note that the ``extension by zero" operator $E_0$ is bounded from $H^{1}(\Omega_1)$ to $H^{1}(\Omega)$ exactly on the subspace $H^{1}_o(\Omega_1)$. The notation $\vecs{\hat{n}}$ is reserved for the outer normal to the boundary. We also require basic facts about interpolation of Sobolev spaces which can be found in \cite{Adams:1975fk, Lions:1973fk} (a good, concise account is also given in {\sc m{\scriptsize c}lean} \cite{Mclean:2000kx}). Finally, $C$ will denote a positive constant, usually different at different instances, which is independent of $\lambda$.


\section{Weak Solutions and \emph{A priori} Estimates}
We turn now to the definitions and estimates which will be essential in what follows. In this section, $\Gamma_0$ is Lipschitz while $\Gamma$ is smooth.
\subsection{Weak Solutions}

On a Lipschitz domain the classical Green's formulas hold (see {\sc costabel} \cite{Costabel:1990uq}). This is the basis for the following weak formulation of the initial-boundary value problem:
\begin{definition}
We say that $\rho \in H_{o}^{1,0}(Q_1)$ is a generalized solution of \eqref{eq:sobolevGenpureSmol} and \eqref{eq:sobolevpureSmolBC} if
 for all $\psi\in H^{1,1}(Q_1)\equiv H^{1}(Q_1)$ with $\at{\psi}{\Sigma} =0$ and  $\at{\psi}{\Omega_{1,\{T\}}} =0$.
 \begin{equation*}
 \int_{Q_1}{\paren{\kappa\nabla\rho\cdot\nabla \psi -\rho\partial_t{\psi}}} -\int_{\Omega_{1,\{0\}}}{g\psi} =0;
\end{equation*}
\end{definition}


Using the Lion's Projection Lemma (for a statement see \cite[Lemma 2.1]{Costabel:1990uq}), one can show the existence and uniqueness of the weak solution 
$\rho$ as defined above. The argument is very similar to that in \cite[Lemma 2.3]{Costabel:1990uq} and can be found in {\sc agbanusi} \cite{Agbanusi:2016ibp}.
\begin{definition}\label{defn:peturb_weak_soln}
We say that  $\pl \in H^{1,0}(Q)$ is a generalized solution of \eqref{eq:sobolevGenpureDoi} and \eqref{eq:sobolevpureDoiBC} if
 for all $\phi\in H^{1}(Q)$ with  $\at{\phi}{\Omega_{\{T\}}} =0$.
\begin{equation*}
 \int_{Q}{\paren{\kappa\nabla\pl\cdot\nabla \phi-\pl\partial_t{\phi}}} -\int_{\Omega_{\{0\}}}{\ext{g}\phi} +\lambda\int_{Q_0}\pl\phi =0.
  \end{equation*}

\end{definition}

A crucial fact is that we can also think of the Doi problem as a transmission problem as a simple integration by parts argument, which we omit, shows. (See  {\sc girsanov} 
\cite{Girsanov:1960dc}, {\sc ladyzhenskaya et al} \cite{Ladyzhenskaya:1966kx} and {\sc Olenik} \cite{Olenik:1959dc}):
\begin{lemma}
 Let $\pl^{+} =\at{\pl}{\Omega_0}$ and $\pl ^{-}=\at{\pl}{\Omega_1}$. Then the solution to the Doi problem can be obtained by finding 
$\pl^{+} \in H^{1,0}(Q_0)$ and $\pl^{-} \in H^{1,0}(Q_1)$ such that
\begin{equation}\label{eq:PieceGenpureDoi}
\left. \begin{aligned} 
\partial_t\pl &= \kappa\lap{\pl^{+}(t,x)} - \lambda \, \pl^{+}(t,x), \quad (t,x)\in Q_0,\\
\partial_t\pl &=  \kappa \lap{\pl^{-}(t,x)}, \quad (t,x) \in Q_1, 
\end{aligned}
\right\}
\end{equation}
with the transmission conditions,
\begin{equation}\label{eq:coup_cond}
\left. \begin{aligned}
\pl^{+}(t,x) &=\pl^{-}(t,x),\quad (t,x)\in \Sigma_0,\\
\nabla\pl^{+}(t,x)\cdot\vecs{\hat{n}}&= \nabla\pl^{-}(t,x)\cdot\vecs{\hat{n}}, \quad (t,x)\in \Sigma_0,
\end{aligned}
\right \}
\end{equation}
and the external boundary condition $\nabla\pl^{-}(t,x)\cdot\vecs{\hat{n}}=0$, for $x\in\Sigma$ as well as the initial conditions $\pl^{+}(0,x) =0$
and $\pl^{-}(0,x) =g(x)$.
\end{lemma}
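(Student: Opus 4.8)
The plan is to prove that the single global weak identity defining a generalized solution $\pl$ of the Doi problem is \emph{equivalent} to the transmission system \eqref{eq:PieceGenpureDoi}--\eqref{eq:coup_cond}, by passing back and forth between one identity on the full cylinder $Q$ and two localized identities on $Q_0$ and $Q_1$. The only analytic tools I expect to need are Green's formula on the Lipschitz domains $\Omega_0,\Omega_1$ (valid, as noted just before the definitions) together with the trace and gluing results of Section~2, in particular Corollary~\ref{cor:trace_ext} and the lateral trace map $\mathcal{T}^x_0$ coming from the corollary to Theorem~\ref{thm:parab_trace}. Throughout I will write $\pl^{+}:=R_0\pl$, $\pl^{-}:=R_1\pl$, and let $\vecs{\hat n}$ denote the unit normal on $\Gamma_0$ pointing outward from $\Omega_0$, so that the outward normal of $\Omega_1$ on $\Gamma_0$ is $-\vecs{\hat n}$.

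For the forward direction I would first localize the test function. Taking $\phi\in\mathcal{D}(Q_0)$ (supported in the open interior, hence vanishing near $\Sigma_0$, near $\Sigma$, and near the time faces) kills every term involving $Q_1$ and the initial slice, leaving exactly the weak form of $\partial_t\pl^{+}=\kappa\lap{\pl^{+}}-\lambda\pl^{+}$ in $Q_0$; taking $\phi\in\mathcal{D}(Q_1)$ gives $\partial_t\pl^{-}=\kappa\lap{\pl^{-}}$ in $Q_1$, so the bulk equations hold. The matching condition $\pl^{+}=\pl^{-}$ on $\Sigma_0$ is then automatic, since $\pl\in H^{1,0}(Q)$ is a single function across $\Gamma_0$ and so its lateral trace computed from the $\Omega_0$ side coincides with that from the $\Omega_1$ side. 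Next I would take a \emph{general} $\phi\in H^{1}(Q)$ with $\phi|_{\Omega_{\{T\}}}=0$, split $\int_Q=\int_{Q_0}+\int_{Q_1}$, and apply Green's formula in $x$ on each piece: the bulk terms cancel against the equations just derived, the term $\int_{\Omega_{\{0\}}}\ext{g}\,\phi$ paired against a $\phi$ that does not vanish at $t=0$ forces the initial data $\pl^{+}(0)=0$, $\pl^{-}(0)=g$ (recall $\ext{g}=0$ on $\Omega_0$, $=g$ on $\Omega_1$), and the surviving boundary integrals are $\int_{\Sigma}\kappa(\nabla\pl^{-}\cdot\vecs{\hat n})\phi$ and $\int_{\Sigma_0}\kappa(\nabla\pl^{+}-\nabla\pl^{-})\cdot\vecs{\hat n}\,\phi$. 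Letting $\phi|_{\Sigma}$ vary freely yields the external Neumann condition, and letting $\phi|_{\Sigma_0}$ vary freely yields the flux-continuity condition of \eqref{eq:coup_cond}.

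For the converse I would glue $\pl:=\pl^{+}$ on $Q_0$ and $\pl:=\pl^{-}$ on $Q_1$. Because $\pl^{+}=\pl^{-}$ on $\Sigma_0$, Corollary~\ref{cor:trace_ext}—the statement that the singular $\delta$-contribution to the distributional gradient vanishes precisely when the traces agree—guarantees $\pl\in H^{1,0}(Q)$ with $\nabla\pl$ given piecewise. Reversing the two integrations by parts, the flux-continuity condition makes the $\Sigma_0$ boundary terms cancel and the external Neumann condition annihilates the $\Sigma$ term, so the global weak identity is recovered; uniqueness of each formulation then identifies the solutions.

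The step I expect to be the main obstacle is giving rigorous meaning to the conormal traces $\nabla\pl^{\pm}\cdot\vecs{\hat n}$ on $\Sigma_0$. A priori $\pl^{\pm}\in H^{1,0}$ only, so $\nabla\pl^{\pm}$ has no classical $L^2$ boundary trace and the normal derivative must be read off weakly. The clean route is to use $\lap{\pl^{\pm}}\in L^2$, which the bulk equations already supply, and then to \emph{define} the normal derivative through Green's formula as an element of the dual of the anisotropic lateral trace space; the arbitrariness of $\phi|_{\Sigma_0}$, which ranges over that trace space, then forces the equality. Setting up this duality pairing correctly, and verifying that all the boundary integrals are legitimate on the merely Lipschitz surface $\Gamma_0$, is where the genuine work lies, the localization and gluing steps being routine once this is in place.
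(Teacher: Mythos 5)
Your proposal is correct and follows essentially the same route as the paper's own (two-sentence) proof: Corollary~\ref{cor:trace_ext} supplies the equivalence between $\pl\in H^{1,0}(Q)$ and the matching of the lateral traces on $\Sigma_0$, and an integration by parts combined with the flux-continuity condition gives the rest. You simply carry out in full detail the localization, the weak interpretation of the conormal derivatives, and the gluing step that the paper compresses into ``an integration by parts and the use of the second coupling condition gives the result.''
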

The papers \cite{Girsanov:1960dc,Ladyzhenskaya:1966kx,Olenik:1959dc} also contain existence/uniqueness results for fixed $\lambda$.

\subsection{Energy Estimates}
This section is devoted to some simple integral estimates which are at the heart of all the results in this note. The first is
\begin{lemma}[Uniform $L^2$ Bounds]
\label{lem:UniformBounds}
Let $\pl(t,x)$ satisfy \eqref{eq:sobolevGenpureDoi}. Given initial condition $\ext{g}(x)$ in $L^2(\Omega)$, then $\pl$ and $\displaystyle \nabla{\pl}(t,x)$ are uniformly bounded 
in $L^{\infty}(I;L^2(\Omega))$ and  $L^2(Q)$ respectively.
In particular, $\norm{\pl}_{L^2(Q_0)} \to 0$ as $\lambda \to \infty$.
\end{lemma}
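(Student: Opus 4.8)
The plan is to run the standard energy method, using $\pl$ itself as a test function in the variational formulation of \eqref{eq:sobolevGenpureDoi}. Formally, I multiply the equation by $\pl$ and integrate over $\Omega$: the time term becomes $\tfrac12\DO{t}\norm{\pl(t)}^2_{L^2(\Omega)}$, the Laplacian term integrates by parts with the Neumann condition $\nabla\pl\cdot\vec{n}=0$ on $\Gamma$ annihilating the boundary contribution, and the potential term contributes $\lambda\norm{\pl(t)}^2_{L^2(\Omega_0)}$, since $\ind_{\overline{\Omega}_0}$ restricts the integral to $\Omega_0$. This yields the pointwise-in-time energy identity
\begin{equation*}
\tfrac12\DO{t}\norm{\pl(t)}^2_{L^2(\Omega)} + \kappa\norm{\nabla\pl(t)}^2_{L^2(\Omega)} + \lambda\norm{\pl(t)}^2_{L^2(\Omega_0)} = 0.
\end{equation*}

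Next I would integrate this identity in time over $(0,t)$ and use that the initial datum satisfies $\norm{\pl(0)}_{L^2(\Omega)} = \norm{\ext{g}}_{L^2(\Omega)} = \norm{g}_{L^2(\Omega_1)}$, since $E_0$ extends by zero off $\Omega_1$. Because all three terms on the left are nonnegative and the right-hand side $\tfrac12\norm{g}^2_{L^2(\Omega_1)}$ is independent of $\lambda$, each term is controlled uniformly in $\lambda$. Reading off the consequences: the supremum over $t\in I$ of $\norm{\pl(t)}^2_{L^2(\Omega)}$ is bounded by $\norm{g}^2_{L^2(\Omega_1)}$, giving the uniform $L^{\infty}(I;L^2(\Omega))$ bound; the integral $\kappa\int_0^T\norm{\nabla\pl(s)}^2_{L^2(\Omega)}\,ds$ is likewise bounded, giving the uniform $L^2(Q)$ bound on $\nabla\pl$. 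Finally, retaining the potential term gives $\lambda\norm{\pl}^2_{L^2(Q_0)}\le\tfrac12\norm{g}^2_{L^2(\Omega_1)}$, whence $\norm{\pl}^2_{L^2(Q_0)}=\mathcal{O}(\lambda^{-1})\to0$ as $\lambda\to\infty$, which is the last assertion.

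The one genuine subtlety — and the step I expect to require the most care — is that $\pl$ is only known a priori to lie in $H^{1,0}(Q)$, which controls its spatial gradient but not its time derivative, so $\pl$ is not directly an admissible test function (admissible $\phi$ live in $H^1(Q)$ and must vanish on $\Omega_{\{T\}}$, whereas I want to integrate up to a variable time $t$). To make the formal computation rigorous I would work with the Galerkin approximations underlying the existence proof via Lions' projection lemma, establish the identity for the smooth approximants where every manipulation is legitimate, and then pass to the limit; the bounds survive because the relevant norms are weakly lower semicontinuous. Equivalently, one may regularize in time by Steklov averaging, test with the averaged solution truncated at time $t$, and then remove the regularization. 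Either route converts the formal energy identity into a rigorous energy inequality with the same constants, which is precisely what the three stated conclusions require.
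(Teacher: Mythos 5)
Your argument is exactly the paper's proof: multiply \eqref{eq:sobolevGenpureDoi} by $\pl$, integrate over $\Omega$, integrate by parts using the Neumann condition on $\Gamma$, integrate in time, and read the three uniform bounds (with the $\mathcal{O}(\lambda^{-1/2})$ rate on $\norm{\pl}_{L^2(Q_0)}$) off the resulting energy identity \eqref{eq:MainUniformBound}. Your closing paragraph on justifying the formal testing by $\pl$ via Galerkin or Steklov averaging is a point of rigor the paper silently elides, but it does not change the substance of the argument.
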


\begin{proof}
Multiply \eqref{eq:sobolevGenpureDoi} by $\pl(t,x)$ and integrate over $\Omega$ to obtain
\begin{equation*}
\frac{1}{2}\DO{t}\int_{\Omega}{\pl^{2}~dx} = \kappa\int_{\Omega}{\pl\Delta\pl~dx}-\lambda\int_{\Omega_0}{\pl^{2}~dx}.
\end{equation*}
Integrating by parts on the second term, applying boundary conditions on $\partial\Omega$ followed by an integration from 0 to $T$ in $t$ gives
\begin{equation}\label{eq:MainUniformBound}
\frac{1}{2}\norm{\pl(t,\cdot)}_{L^2(\Omega)}^{2} + \kappa\norm{\nabla\pl}^{2}_{L^2(Q)} +\lambda\norm{\pl}^{2}_{L^2(Q_0)} = \frac{1}{2}\norm{\pl(0,\cdot)}^{2}_{L^2(\Omega)}.
\end{equation}
Since $\pl(0,x):= \ext{g}(x)$, it follows that
\begin{align*}
\sup_{t\in I}{\norm{\pl(t)}^{2}_{L^2(\Omega)}}\leq K_1,\quad
\norm{\nabla\pl}^{2}_{L^2(Q)}\leq K_2,\quad
\norm{\pl}^{2}_{L^2(Q_0))}\leq\frac{K_1}{2\lambda}.
\end{align*}
Here $\displaystyle K_1 = \norm{g(x)}^{2}_{L^2(\Omega_1)}$ and $K_2 =K_{1}/2 \kappa$. Thus $\pl(t,x)$ and $\displaystyle \nabla{\pl}(t,x)$ are uniformly bounded while  $\norm{\pl}_{L^2(Q_0)} = \mathcal{O}(\lambda^{-\frac{1}{2}})$, as claimed.
\end{proof}

If we impose additional regularity in the initial condition $g$ we can prove:

\begin{lemma}
\label{lem:MoreUniformBounds}
Suppose now that the initial condition $\ext{g}(x)$ is in $H^1(\Omega)$. 
Then for a.e. $t>0$, $\partial\pl/\partial t$ and $ \nabla\pl(t,\cdot)$ are uniformly bounded in $L^2(Q)$ and $L^2(\Omega)$ 
respectively. Moreover, $\displaystyle \norm{\pl(t,\cdot)}_{L^2(\Omega_0)}\to 0$ as $\lambda \to \infty$, for a.e. $t\in I$.
\end{lemma}

\begin{proof}
The proof is similar to that above except for a mild technicality. We multiply equation ~\eqref{eq:sobolevGenpureDoi} by $\partial\pl/\partial t$ and integrate by parts
over $\Omega$ to obtain (henceforth, we drop the integration measures)
\begin{equation}\label{eq:step}
\int_{\Omega}{|\partial_t\pl|^2} =  -\kappa\sum_{i}\int_{\Omega}{\PD{\pl}{x_i}\PDDM{\pl}{x_i}{t}}-\lambda\int_{\Omega_0}{\pl\partial_t\pl}.
\end{equation}
A theorem of {\sc olenik} \cite{Girsanov:1960dc, Olenik:1959dc} allows us to switch the order of differentiation so  
\begin{equation*}
\sum_{i}\int_{\Omega}{\PD{\pl}{x_i}\PDDM{\pl}{x_i}{t}}= \sum_{i}\int_{\Omega}{\PD{\pl}{x_i}\PDDM{\pl}{t}{x_i}} =\frac{1}{2}\DO{t}\int_{\Omega}{\abs{\nabla\pl}^2}.
\end{equation*}
Hence it follows that \eqref{eq:step} becomes
\begin{equation*}
2\norm{\partial_t\pl(t,\cdot)}^2_{L^2(\Omega)}=-\DO{t}\paren{\kappa\norm{\nabla\pl(t,\cdot)}^{2}_{L^2(\Omega)}+\lambda\norm{\pl(t,\cdot)}^2_{L^2(\Omega_0)}}.
\end{equation*}
An integration in $t$ shows the following estimates which prove the result:
\begin{align*}
\sup_{I}\norm{\nabla\pl(t,\cdot)}^{2}_{L^2(\Omega)}\leq C_1,\quad
\norm{\partial_t\pl}^2_{L^2(Q)}\leq C_2,\quad
\sup_{I}\norm{\pl(t,\cdot)}^2_{L^2(\Omega_0)}\leq\frac{C_3}{\lambda}.
\end{align*}
\end{proof}

\section{Weak Convergence}
The preceding estimates---combined with interpolation, duality and the trace theorem---have some simple consequences  which we explore in this section. The assumptions of \S3 still hold. In particular, $\Gamma_0$ is Lipschitz and $\Gamma$ is smooth.
We begin with:
\begin{lemma}\label{lem:weak_convergence}
There exist $p^*$ and a subsequence of $\{\pl\}$ such that $\pl\rightharpoonup p^*$ weakly in $H^{1,1}(Q)$ and $\pl \rightharpoonup p^*$ 
weak--$\ast$ in $L^{\infty}(I;H^1(\Omega))$.
\end{lemma}
 
 \begin{proof}
Lemmas \ref{lem:UniformBounds} and \ref{lem:MoreUniformBounds} imply that $\pl$ is uniformly bounded in $L^{\infty}(I;H^1(\Omega))$. 
Thus weak--$\ast$ convergence follows by duality. Now as $T$ is finite, we have the elementary embedding $L^{\infty}(I;L^2(\Omega))\hookrightarrow L^{2}(Q)$.
This implies that $\pl$ is also a uniformly bounded sequence in $H^1(I;L^2(\Omega))\, \cap\, L^{2}(I;H^1(\Omega))$ as well.  The existence of a weak limit follows since bounded sequences in a Hilbert space have a weakly convergent subsequence.
 \end{proof}

\begin{remark} 
By the Sobolev embedding theorem, $\pl$ is also uniformly bounded in $C^{0}([0,T];L^2(\Omega))$.
\end{remark} 

Next, we show that in $Q_0$ one can get convergence in more regular Sobolev spaces. Recall that $\pl^{+}=\at{\pl}{Q_0}$.

\begin{lemma}\label{lem:interior_higher_conv}
Let $0\leq \epsilon_1, \epsilon_2\leq1$ be fixed and let $2\epsilon_0:= \min \paren{1-\epsilon_1,1-\epsilon_2}$. Then $\norm{\pl^{+}}_{H^{\epsilon_1,\epsilon_2}(Q_0)} =\mathcal{O}(\lambda^{-\epsilon_0})$.
 If in addition $1/2<\epsilon_1\leq1$, then $\norm{\mathcal{T}_0^x{\pl^{+}}}_{H^{\sigma_1,\sigma_{2}}(\Sigma_0)} =\mathcal{O}(\lambda^{-\epsilon_0})$ with $\sigma_1 :=\epsilon_1-1/2$ and $\sigma_2 :=\epsilon_{2}(1-1/2\epsilon_1)$.

\end{lemma}

\begin{proof}
Since $\pl^{+}$ is uniformly bounded in $H^{1,1}(Q_0)$ we have by interpolation that
\begin{align*}
\norm{\pl^{+}}_{H^{\epsilon_{2}}(I;L^2(\Omega_0))} &\leq C \norm{\pl^{+}}^{1-\epsilon_2}_{L^2(Q_0)}\,\,\,\norm{\pl^{+}}^{\epsilon_2}_{H^{1}(I;L^2(\Omega_0))}
\leq C\lambda^{-(1-\epsilon_2)/2}.
\end{align*}
Similarly we have
\begin{align*}
\norm{\pl^{+}}_{ L^2(I;H^{\epsilon_1}(\Omega_0))} &\leq C \norm{\pl^{+}}^{1-\epsilon_1}_{L^2(Q_0)}\,\,\,\norm{\pl^{+}}^{\epsilon_1}_{L^{2}(I;H^1(\Omega_0))}
\leq C\lambda^{-(1-\epsilon_1)/2}.
\end{align*}
Hence
\begin{align*}
\norm{\pl^{+}}_{H^{\epsilon_1,\epsilon_{2}}(Q_0)}^2 &= \paren{\norm{\pl^{+}}_{L^2(I;H^{\epsilon_1}(\Omega_0))}^2 + \norm{\pl^{+}}_{H^{\epsilon_{2}}(I;L^2(\Omega_0))}^2}
 = \mathcal{O}(\lambda^{-2\epsilon_0}),
\end{align*}
 which proves the first part. A direct application of the trace theorem then completes the proof.
 
\end{proof}
Having established a weak limit, $p^{*}$, we show that it coincides with $\rho$. 
\begin{proposition}
 We have that $\pl\rightharpoonup\ext{\rho}$ weakly.
\end{proposition}
\begin{proof}
Fix any $0\leq\sigma_1,\sigma_2<\frac{1}{2}$. Lemma~\ref{lem:interior_higher_conv} implies that $\norm{\mathcal{T}^x_0p^{*}}_{H^{\sigma_1,\sigma_2}(\Sigma_0)} = 0$. As $\pl^{+}$ is uniformly bounded in $H^{1,1}(Q_0)$ it follows, by the trace theorem, that $\at{\pl}{\Sigma_0}$
 is uniformly bounded in $H^{\frac{1}{2},\frac{1}{2}}(\Sigma_0)$. However, $H^{\frac{1}{2},\frac{1}{2}}(\Sigma_0)\hookrightarrow H^{\sigma_1,\sigma_2}(\Sigma_0)$ embeds
 compactly and as such $\at{p^{*}}{\Sigma_0} =0$ in $H^{\frac{1}{2},\frac{1}{2}}(\Sigma_0)$ or, equivalently, 
$p^{*} \in H_{o}^{1,0}(Q_1)$.
 
Now for any  $\psi \in H^1(Q_1)$ with $\at{\psi}{\Sigma_0} =\at{\psi}{\Omega_{1,\{T\}}} =0$, let $\phi =\ext{\psi}$. Using the existence of a weak limit and taking limits in 
Definition \ref{defn:peturb_weak_soln}, we get
\begin{align*}
0&= \int\limits_{Q_1}{\paren{\kappa\nabla p^{*}\cdot\nabla \psi-p^{*}\pd{\psi}{t}}} -\int\limits_{\Omega_{1,\{0\}}}{g\psi}.
  \end{align*}
  Hence $p^{*}$ is a weak solution to \eqref{eq:sobolevGenpureSmol} and \eqref{eq:sobolevpureSmolBC} and by uniqueness $p^{*} =\rho$.
\end{proof}

Observe that this weak convergence, in view of the Rellich's compactness theorem, implies strong convergence in $L^2(Q_1)$ although with no rate. We end this section with a slight sharpening of Lemma~\ref{lem:interior_higher_conv} which we shall use in the next section to derive a rate for the  $L^2(Q_1)$ convergence.

\begin{lemma}[$L^2$ boundary convergence]\label{lem:optimal_boundary}
We have $\norm{\pl}_{L^2(\Sigma_0)} = \mathcal{O}(\lambda^{-\frac{1}{4}})$.
\end{lemma}

\begin{proof}
By the trace theorem, \[\norm{\pl}_{L^2(\Sigma_0)} \leq C( \epsilon^{-1}\norm{\pl}_{L^2(Q_0)} + \epsilon\norm{\nabla\pl}_{L^2(Q_0)}).\]
Recalling that Lemmas \ref{lem:UniformBounds} and \ref{lem:MoreUniformBounds} imply $\norm{\nabla\pl}_{L^2(Q_0)}\leq C$ and $\norm{\pl}_{L^2(Q_0)}\leq C\lambda^{-1/2}$, picking $\epsilon=\lambda^{-r}$ for some $r>0$, this trace estimate becomes
\[
\norm{\pl}_{L^2(\Sigma_0)}\leq C(\lambda^{r-\frac{1}{2}}+\lambda^{-r}).
\]
Finally, the choice $r=1/4$ balances the two quantities in brackets, completing the proof.
\end{proof}
Formally, we could take $\epsilon_1 = 1/2$ in Lemma \ref{lem:interior_higher_conv} to get 
precisely the same estimate in Lemma \ref{lem:optimal_boundary}. However, the trace map ceases to be a bounded surjection when $\epsilon_1 =1/2$, hence the roundabout
argument.

\section{Strong Convergence}
So far, we have shown that $\pl\rightharpoonup E_0[\rho]$ weakly and $\at{\pl}{Q_0} \to 0$ strongly. We would like to ``transfer" this strong convergence on the interior $Q_0$
to the exterior domain, $Q_1$. 
We start by defining the error, $e$ between the two solutions in $Q_1$: $e(t,x):= \pl^{-}(t,x) -\rho(t,x)$. It follows that $e$ satisfies:
\begin{align}
\partial_t{e} &= \kappa\lap{e}, \quad (t,x) \in Q_1,
\end{align}
\begin{equation}
\left.\begin{aligned}
&e(t,x) =\pl^{+}(t,x), &&\quad (t,x) \in \Sigma_0,\\
&\nabla e(t,x)\cdot\vecs{\hat{n}} =0, &&\quad (t,x) \in \Sigma,
\end{aligned}
\right\}
\end{equation}
with the initial condition $e(0,x)=0$. Thus $e$ satisfies a homogenous heat equation in $Q_1$ and is  basically controlled by the behavior of $\pl^{+}$
on $\Sigma_0$. Since $\at{\pl^{+}}{\Sigma_0} \to 0$ we should get that $e\to0$.

The classical theory for such equations requires the Dirichlet data to be
in $H^{\frac{1}{2},\frac{1}{4}}(\Sigma_0)$. Unfortunately, Lemma~\ref{lem:interior_higher_conv} only 
shows that $\norm{\pl^{+}}_{H^{\frac{1}{2},\frac{1}{4}}(\Sigma_0)} =\mathcal{O}(1)$, while Lemma~\ref{lem:optimal_boundary} gives $\norm{\pl^{+}}_{L^2(\Sigma_0)}=\mathcal{O}(\lambda^{-1/4})$.
Our task is to extend the solvability theory for the above equations in order to include weaker boundary spaces where we happen to have stronger convergence rates. For technical reasons, in this section we assume both $\Gamma_0$ and $\Gamma$ are $C^{\infty}$.

Consider the following abstract parabolic Cauchy problem:
\begin{equation}\label{eq:abstract_smol_problem}
\left.\begin{aligned}
(\partial_t-\kappa\lap) u&=f && (t,x) \in Q_1\\
u &= h && (t,x)\in \Sigma_0 \\
\nabla u \cdot \vecs{\hat{n}}&=g &&(t,x)\in \Sigma\\
u(0,x)&= u_0(x)
\end{aligned}
\right\}
\end{equation}

 We will now give a way to define solutions with rather ``rough'' inhomogenous data $f(t,x)$, $h(t,x)$, $g(t,x)$ and $u_0(x)$. The technique is essentially a time reversal and
 transposition argument and appears to be due, in this form, to  {\sc lions \& magenes} \cite{Lions:1973fk} although the idea can be traced back to Holmgren. 
 
 First we introduce some useful Sobolev spaces. Let $\{\phi_k(x),\alpha_k\}$ be eigenpairs---normalized eigenfunctions and eigenvalues---in $L^2(\Omega_1)$
  satisfying
 \begin{align*}
-\lap\phi_k =\alpha_k\phi_k;\quad
\at{\phi_k}{\Gamma_0}=0;\quad
\at{\nabla \phi_k\cdot \vecs{\hat{n}}}{\Gamma} =0
\end{align*}
Define $\dot{H}^{r}(\Omega_1):=\{u\in L^2(\Omega_1)\,:\, \sum_k\alpha_k^{r}\abs{(u,\phi_k)}^2 < \infty\}$
and put $\dot{H}^{r,s}(Q_1) :=L^2(I;\dot{H}^{r}(\Omega_1))\cap {H}^{s}(I;L^2(\Omega_1))$. A characterization of these spaces is given in {\sc agbanusi} \cite{Agbanusi:2016ibp}.
In particular, it is shown that $\dot{H}^{1}(\Omega_1)={H}^{1}_{o}(\Omega_1)$.

Let us define the operator $\Lambda:= \partial_t-\kappa\lap$ whose \emph{formal} transpose is $\Lambda^{t}:= -\partial_t-\kappa\lap$. The following result is proved
in \cite{Agbanusi:2016ibp}:

\begin{proposition}\label{prop:boundedness_of_transpose}
For $v\in L^2(Q_1)$, there exists a unique $w:=\mathcal{S}(v)\in \dot{H}^{2,1}(Q_1)$ satisfying 
\[\Lambda^{t}w =v;\quad \at{w}{\Sigma_0}=0;\quad \at{\nabla w\cdot \vecs{\hat{n}}}{\Sigma}=0;\quad w(T,x)=0,\]
and such that the following estimates holds for some $C>0$:
\[
\norm{w}_{\dot{H}^{2,1}(Q_1)}\leq C\norm{v}_{L^2(Q_1)}.
\]
\end{proposition}

This leads us to the following definition
\begin{definition}
Suppose that the data $f(t,x)$, $h(t,x)$, $g(t,x)$ and $u_0(x)$ of \eqref{eq:abstract_smol_problem} are in $(\dot{H}^{2,1}(Q_1))^{*}$, $H^{-\frac{1}{2},-\frac{1}{4}}(\Sigma_0)$, $H^{-\frac{3}{2},-\frac{3}{4}}(\Sigma)$
and $\dot{H}^{-1}(\Omega_1)$ the dual space of $\dot{H}^{2,1}(Q_1)$, $H^{\frac{1}{2},\frac{1}{4}}(\Sigma_0)$, $H^{\frac{3}{2},\frac{3}{4}}(\Sigma)$ and $\dot{H}^{1}(\Omega_1)$ respectively.  
Let the linear functional:
\begin{equation*}
\mathcal{B}(v):= \int\limits_{Q_1}f(t,x)w(t,x) +\int\limits_{\Sigma_0}h(t,x)\partial_{\vecs{\hat{n}}}w(t,x) + \int\limits_{\Sigma}g(t,x){w}(t,x)+\int\limits_{\Omega_1}u_0(x)w(0,x), 
\end{equation*}
where $w = \mathcal{S}(v)$ is the unique solution guaranteed by Proposition~\ref{prop:boundedness_of_transpose},
we say that $u\in L^2(Q_1)$ is a \textit{very weak} solution of \eqref{eq:abstract_smol_problem} if for all $v \in L^2(Q_1)$
\begin{align}
\mathcal{B}(v): = \int_{Q_1}u(t,x)v(t,x).
\end{align}
\end{definition}
The next result is a direct consequence of the preceding definition. It is established by showing that $\mathcal{B}(v)$ is a bounded linear functional on 
$L^2(Q_1)$ and applying the Riesz representation theorem.

\begin{proposition}\label{prop:existence_very_weak}
The equations  \eqref{eq:abstract_smol_problem} admit a unique \emph{very weak} solution $u \in L^2(Q_1)$ satisfying
\begin{equation}\label{eq:very_weak_est}
 \norm{u}_{ L^2(Q_1)}  \leq C\paren{\norm{u_0}_{\dot{H}^{-1}(\Omega_1)}+ \norm{f}_{\dot{H}^{-2,-1}(Q_1)}  + \norm{h}_{H^{-\frac{1}{2},-\frac{1}{4}}(\Sigma_0)}+\norm{g}_{H^{-\frac{3}{2},-\frac{3}{4}}(\Sigma)}}.
\end{equation}
\end{proposition}
A complete proof can be found in \cite{Agbanusi:2016ibp} and we stress that this proof relies on the smoothness of $\Gamma_0$. Armed with this, we turn to the
\begin{proof}[{\bf Proof of Theorem~\ref{thm:main_result}}.]
Simply apply  \eqref{eq:very_weak_est} of Proposition~\ref{prop:existence_very_weak} to $e:= \pl^{-}(t,x) -\rho(t,x)$ with $f=0$, $u_0 =0$, $g=0$ and $h:=\at{\pl^+}{\Sigma_0}\in L^2(\Sigma_0)\subset H^{-\frac{1}{2},-\frac{1}{4}}(\Sigma_0)$.
\end{proof}
Interpolating between the $L^2(Q_1)$ of  bounds of Theorem~\ref{thm:main_result} and the uniform $H^{1,1}(Q_1)$ bounds of \S3 gives
\begin{corollary}
Fix $\delta_1, \delta_2 \in (0,1)$ and define $4\delta_0:= \min \paren{1-\delta_1,1-\delta_2}$. Then we have $\norm{\pl^{-}-\rho}_{H^{\delta_1,\delta_2}(Q_1)} =\mathcal{O}(\lambda^{-\delta_0}).$
\end{corollary}


\section{Final Remarks, Open Problems}


The rate we obtained is not the one conjectured to be sharp in the case where the inclusion has a smooth boundary. However, these conjectures appear to be based on the case where $\Omega =\R^m$, i.e., when there is no external boundary. We have not successfully worked out the \emph{precise} effect of the external boundary. That there could be extra difficulty is suggested by examining the corresponding large coupling problem for the \emph{acoustic wave equation}. If there are no external boundaries and the obstacle is strictly convex, then the rays of geometric optics interact at most once with $\Gamma_0$ if they are not energetic enough compared to $\lambda$ (or if they just graze $\Gamma_0$). This is no longer true if there is an external boundary due to multiple reflections and trapping. Things get even more complicated if one considers high frequency rays\ldots

In any case, here are some possible paths  towards improving the results of this paper.
Our proof rests on the energy estimates, the trace theorems and Proposition \ref{prop:existence_very_weak}. The energy estimates derived here only assumed that the boundary was Lipschitz. In general, energy estimates rely on choosing ``multipliers" and it is possible other choices of multipliers, perhaps better reflecting the geometry/regularity of the boundary, might give stronger estimates. 

For arbitrary $u\in H^1$ the trace theorem says that the restriction to the boundary is in $H^{1/2}$. However, this can be improved if, in addition, $u$ satisfies
$P(x,D)u \in L^2$ for certain second order operators $P$. For the improvement when $P$ is hyperbolic, see {\sc tataru} \cite{Tataru:1998fk}. Thus, there remains the possibility of obtaining sharper trace theorems better suited to the problem at hand.

We do not know the analogue---if there is one---of Proposition \ref{prop:existence_very_weak} or \eqref{eq:very_weak_est} when $\Gamma_0$ is Lipschitz and the Dirichlet data is assumed to be in $L^2(\Sigma_0)$. The extension of the proposition to this case would imply that our main theorem, Theorem \ref{thm:main_result}, holds for Lipschitz domains.

Finally, let us mention the possibility of constructing good approximate solutions i.e. parametrices near the interface $\Gamma_0$ which may yet yield better estimates but require much more sophisticated tools than those employed here.

\thispagestyle{empty}
\bibliography{CPDE Revision 1.bbl} 
\bibliographystyle{amsplain}

\end{document}